\newtheorem{thm}{Theorem}[section]
\newtheorem{defi}[thm]{Definition}
\newtheorem{prop}[thm]{Proposition}
\newtheorem{cor}[thm]{Corollary}
\newcommand{\donothing}[1]{}
\newcommand{\vect}[2]{
 \left(  \begin{smallmatrix} #1 \\ #2 \end{smallmatrix} \right)}
\newcommand{\CC}{\mathbb{C}}
\newcommand{\ZZ}{\mathbb{Z}}
\newcommand{\exclude}[1]{}
\let\OLDthebibliography\thebibliography
\renewcommand\thebibliography[1]{
  \OLDthebibliography{#1}
  \setlength{\parskip}{0pt}
  \setlength{\itemsep}{0pt plus 0.3ex}
}
\begin{document}


 \title{Fast computation of half-integral weight modular forms}

     \author{Ilker Inam}
     \address{Bilecik Seyh Edebali University, Department of Mathematics, Faculty of Arts and Sciences, 11200 Bilecik, Turkey}
     \email{ilker.inam@bilecik.edu.tr}

     \author{Gabor Wiese}
     \address{University of Luxembourg, Department of Mathematics, Maison du Nombre 6, Avenue de la Fonte, L-4364 Esch-sur-Alzette, Luxembourg}
     \email{gabor.wiese@uni.lu}
   
     \keywords{Modular forms of half-integral weight, Fourier coefficients, computation, Rankin-Cohen operators}
     \subjclass{11F30 (primary); 11F37.}

\begin{abstract}
To study statistical properties of modular forms, including for instance Sato-Tate like problems, it is essential to be able to compute a large number of Fourier coefficients.
In this article, we show that this can be achieved in level 4 for a large range of half-integral weights by making use of one of three explicit bases, the elements of which can be calculated via fast power series operations.
\end{abstract}

\maketitle

\section{Introduction}
Computing modular forms usually means computing a chunk of their Fourier expansions. Whereas a relatively small number of Fourier coefficients suffices to distinguish between two modular forms (of given level and weight), for statistical analyses (such as in Sato-Tate like questions) or for the computation of (special values of) L-functions, one often needs all coefficients up to some huge bound.
In the case of classical modular forms, there exist a number of ways of computing Fourier coefficients, such as the Modular Symbols algorithm (e.g.\ \cite{Stein}, \cite{CAMF}) or via the trace formula (e.g.\ \cite{BeCo}), which work for all weights (at least two) and all levels. All these quite sophisticated algorithms are largely outperformed in the case of level one, but in general weight, by an algorithmic incarnation of the well-known fact that every level one modular form can be written as a polynomial in the two basic Eisenstein series $E_4$ and $E_6$ of weights $4$ and $6$, respectively. To the best of the authors' knowledge, this basic approach is the one allowing the computation of most coefficients in the given generality.

The situation for half-integral weight modular forms is quite similar. There exist general algorithms, working in (most) weights and levels. Many of them (such as the ones implemented in Magma \cite{Magma}) are based on algorithms in integral weights. All such general algorithms for half-integral weight forms known to the authors have severe limitations on the number of coefficients that can be achieved within a reasonable time.

The point of this article is to show that there exist very practical and efficient variants of the classical level one method in half-integral weights. This, again, makes a restriction of the level necessary, namely to $\Gamma_0(4)$, but one retains the full range of half-integral weights.
We present three methods allowing us to reach a huge number of Fourier coefficients, even in reasonably big half-integral weights. These three methods build on the same idea: use fast-to write down power series and perform standard operations on them.

All three methods are based on existing and well-known results from the literature, particularly, the papers of Cohen~\cite{Cohen} and Kohnen~\cite{Ko80}.
More precisely, we present three kinds of bases, which we name the {\em Cohen standard basis}, the {\em Kohnen basis} and the {\em Rankin-Cohen basis}, respectively. Their elements can all be computed by writing down simple power series (representing the standard $\vartheta$-function or Eisenstein series) and performing operations on them, such as addition, multiplication, inversion and derivation.

These three bases thus give rise to quite straight-forward algorithms. For a Magma \cite{Magma} implementation, see~\cite{FB}. A previous version was used for the large scale computations underlying~\cite{IW}.
In our implementation, the computations are performed over the rational numbers. As the numerators and denominators of the coefficients get huge, a lot of time and memory is consumed by operations with huge fractions. A significant speed-up and a significant reduction of the used memory is achieved by working over finite fields or $p$-adic rings with fixed precision.

In order to stress that our approach is extremely fast and efficient, we include here in the introduction some concrete timings and compare them with timings using the standard implementations in Magma \cite{Magma} and Pari/GP \cite{BeCo}. We computed the Fourier coefficients of (four) modular forms forming a basis of the full space of modular forms of weight $13/2$ and level~$4$. Our implementation took 1 second to get to $10^5$, 19 seconds to get to $10^6$ and 317 seconds to get to $10^7$. Pari/GP needed 70 seconds for $10^5$ and already 4069 seconds for $10^6$, we did not run the $10^7$ range. Magma already needed 616 seconds to get to $10^5$ and we did not attempt to go further.

At the end of this note, we also include a simple theoretical analysis of the three algorithms and compare their performance experimentally. One can summarise the findings by stating that in small weights $k + \frac{1}{2}$ with even~$k$, the Rankin-Cohen basis performs best for computing the Kohnen plus-space. In all other cases, the plus-space is best computed by the Kohnen basis. For the computation of the full space, the standard basis always behaves very well.

\subsection*{Acknowledgements}

This work was supported by The Scientific and Technological Research Council of Turkey (TUBITAK) with the project number 118F148. I.I.\ acknowledges partial and complement support by Bilecik Seyh Edebali University research project number 2018-01.BSEU.04-01 and would like to thank the University of Luxembourg for the hospitality in several visits. 

The authors are grateful to Henri Cohen for providing them the initial code for obtaining Hecke eigenforms via Rankin-Cohen brackets in Pari/GP and Winfried Kohnen for interesting discussions. 
Both authors thank the Izmir Institute of Technology, where a huge part of this research was carried out, for its great hospitality. 

Thanks also due to the anonymous referees for their valuable comments and fruitful suggestions, which improved the paper.

\section{Bases of spaces of modular forms}
Let $k$ be a non-negative integer. Denote by $M_k(N)$ the $\CC$-vector space of modular forms of weight $k$ and level~$\Gamma_0(N)$ for a positive integer~$N$.
Furthermore, write $M_{k+1/2}(4)$ for the $\CC$-vector space of modular forms of half-integral weight $k+1/2$ and level~$\Gamma_0(4)$.

\smallskip\noindent{\bf Kohnen plus-space}.
The {\em Kohnen plus-space} $M^+_{k+1/2}(4)$ consists of those $f \in M_{k+1/2}(4)$
such that $a_n(f) = 0$ whenever $(-1)^k n \equiv 2,3 \pmod{4}$. Here $a_n(f)$ denotes the $n$-th Fourier coefficient (for $n \in \ZZ$)
of the Fourier expansion of~$f$ at the standard cusp~$\infty$.
Calculating a basis of the Kohnen plus-space $M^+_{k+1/2}(4)$ from a basis of $M_{k+1/2}(4)$ is a straight forward linear algebra computation.
More precisely, take a basis $f_1,\dots,f_m$ of the full space (with precision~$D$), and, for each $1 \le i \le m$, write the coefficients $a_n(f_i)$ for all $0 \le n < D$ such that $(-1)^k n \equiv 2,3 \pmod{4}$ into a vector $v_i$.
Then take the matrix $M$ of these vectors and compute a basis $b_1,\dots,b_r$ of its kernel.
Then a basis of the Kohnen plus-space (with precision~$D$) is given by
$g_i = \sum_{j=1}^m b_{i,j} f_j$ for $1 \le i \le r$ where $b_{i,j}$ is the $j$-th entry of the vector~$b_i$.

\smallskip\noindent{\bf Integral weight basis.}
We start with integral weight modular forms of the lowest possible level~$1$. The basic result is that the natural embedding
$\CC[E_4,E_6] \to \bigoplus_{k \in \ZZ} M_k(1)$
is an isomorphism of graded algebras, where 
$E_k(q) = \frac{-B_k}{2k} + \sum_{n=1}^\infty \sigma_{k-1}(n) q^n \in M_k(1)$
is the standard Eisenstein series of weight $k \in \ZZ_{\ge 4}$, $q = e^{2\pi i z}$ and $B_k$ is the $k$-th Bernoulli number.
Hence, a $\CC$-basis of the vector space of modular forms of level~$1$ and weight~$k$
is given by $E_4^a \cdot E_6^b$ where $a,b \in \ZZ_{\ge 0}$ run through all possibilities such that $k=4a+6b$.
For more details, see Chapter $8$ and Chapter $10.6$ of \cite{CohenSt}.
From a practical perspective, Eisenstein series are fast to write down, whence the computation of the basis is reduced to the multiplication of power series.
We will describe three generalisations to half-integral weights, based on results from the literature.

\smallskip\noindent {\bf Cohen standard basis.} The above result has the following analog in half-integral weights and smallest possible level~$4$.
Henri Cohen proved in \cite[Proposition 1.1]{Cohen} that the natural embedding
$\CC[\vartheta,F_2] \to \bigoplus_{\ell \in \frac{1}{2}\ZZ} M_\ell(4)$
is an isomorphism of graded algebras, where $\vartheta := \sum_{n \in \ZZ} q^{n^2} = 1 + 2 \sum_{n=1}^\infty q^{n^2} \in M_{1/2}(4)$
is the standard theta series and $F_2 := \sum_{n \ge 1 \textnormal{ odd}} \sigma_1(n) q^n \in M_2(4)$
is the Eisenstein series of weight~$2$ and level~$4$. This immediately leads to the following basis of $M_\ell(4)$ for $\ell \in \frac{1}{2}\ZZ$, which can again be computed by writing down $\vartheta$ and $F_2$ and multiplying power series:

\begin{cor}[Cohen standard basis]
Let $\ell \in \frac{1}{2}\ZZ$. Then the modular forms
\[ \vartheta^a \cdot F_2^b \textnormal{ for all } a,b \in \ZZ_{\ge 0} \textnormal{ such that } \ell = \frac{a}{2} + 2b\]
form a basis of $M_\ell(4)$, which we call the {\em Cohen (standard) basis}.
\end{cor}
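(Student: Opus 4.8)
The plan is to deduce the corollary as a formal consequence of Cohen's graded-algebra isomorphism recalled just above, by passing to homogeneous components. First I would make the grading on the polynomial ring $\CC[\vartheta, F_2]$ explicit: the indeterminate $\vartheta$ is given weight $\frac{1}{2}$ and $F_2$ is given weight $2$, so that the monomial $\vartheta^a F_2^b$ is homogeneous of weight $\frac{a}{2} + 2b$. Since Cohen's map $\CC[\vartheta, F_2] \to \bigoplus_{\ell \in \frac{1}{2}\ZZ} M_\ell(4)$ is an isomorphism of \emph{graded} algebras, it restricts, for each fixed $\ell \in \frac{1}{2}\ZZ$, to a $\CC$-linear isomorphism from the weight-$\ell$ homogeneous component of $\CC[\vartheta, F_2]$ onto $M_\ell(4)$. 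It therefore suffices to produce a $\CC$-basis of this single homogeneous component and transport it through the isomorphism.

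The second step is to identify that basis. The weight-$\ell$ component of $\CC[\vartheta, F_2]$ is by definition the $\CC$-span of the monomials $\vartheta^a F_2^b$ with $a, b \in \ZZ_{\ge 0}$ and $\frac{a}{2} + 2b = \ell$. Writing $a = 2\ell - 4b$, one sees this is a finite (possibly empty) set, and it is empty exactly when there is no admissible pair, consistent with $M_\ell(4) = 0$ in that range. Distinct monomials in a polynomial ring are $\CC$-linearly independent, so these monomials are not merely a spanning set but an honest basis of the homogeneous component. Applying the isomorphism, their images $\vartheta^a F_2^b$, now read as genuine modular forms, form a basis of $M_\ell(4)$, which is precisely the assertion.

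There is no real obstacle here: the entire substance has already been absorbed into the cited Proposition 1.1 of \cite{Cohen}. The one place where that input is genuinely used is the injectivity half of the isomorphism, equivalently the algebraic independence of $\vartheta$ and $F_2$ over $\CC$, which is what guarantees that the weight-$\ell$ monomials stay linearly independent after evaluation as power series; surjectivity onto each $M_\ell(4)$ is what makes them span. Once the graded isomorphism is invoked, the corollary is a purely formal statement about the homogeneous pieces of a two-variable polynomial algebra, and I would present it as such rather than reproving any modularity or independence by hand.
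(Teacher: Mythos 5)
Your proposal is correct and matches the paper's treatment: the paper offers no separate proof, stating that the corollary follows ``immediately'' from Cohen's graded-algebra isomorphism, and your argument simply spells out that immediacy by passing to homogeneous components and using that distinct monomials form a basis of each graded piece of $\CC[\vartheta,F_2]$. Nothing is missing and nothing differs in substance.
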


\smallskip\noindent{\bf Rankin-Cohen basis.} The next basis to describe is based on Rankin-Cohen brackets. We obtained our motivation from the nice example $\delta(z)$ \cite[p.~177]{KZ81}.
We first recall the definition of the Rankin-Cohen bracket \cite[Def.~5.3.23]{CohenSt}:
Let $f,g$ be two modular forms of weights $k$ and $\ell$, respectively, and let $n \in \ZZ_{\ge 0}$. Then
$[f,g]_n := \sum_{j=0}^n (-1)^j \vect {n+k-1}{j} \vect {n+\ell-1}{n-j} f^{(n-j)} g^{(j)}$
is a modular form of weight~$k+\ell+2n$, where $f^{(i)}$ denotes the $i$-th derivative of~$f$ with respect to $q\frac{d}{dq} = \frac{1}{2 \pi i} \frac{d}{dz}$.

We shall now need more general Eisenstein series. For any positive integer $k$ and any pair of primitive Dirichlet characters $\chi_1,\chi_2$, let
$E_k^{\chi_1,\chi_2}(q) = \frac{-B_k^{\chi_1}}{2k} + \sum_{n=1}^\infty \big( \sum_{0< d \mid n} \chi_1(d) \chi_2(n/d) d^{k-1} \big) q^n$
denote the ($q$-expansion of the) normalised Eisenstein series of weight~$k$ attached to the characters $\chi_1,\chi_2$,
where $B_k^{\chi_1}$ denotes the $k$-th generalised Bernoulli number for the character $\chi_1$.
The level of $E_k^{\chi_1,\chi_2}$ is the product of the conductors of $\chi_1$ and $\chi_2$.

\begin{defi}[Rankin-Cohen basis]
\begin{enumerate}[(a)]
\item Let $k \in \ZZ_{\ge 4}$ be even. Let $d = \dim M_{k+1/2}^+(4)$. If the modular forms
\[ \Phi_{k,n} := [E_{k-2n}(4z),\vartheta(z)]_n\]
for $0 \le n \le d-1$ are linearly independent, we call them the {\em Rankin-Cohen basis} of $M_{k+1/2}^+(4)$.
\item Let $k \in \ZZ_{\ge 3}$ be odd. Let $d = \dim M_{k+1/2}(4)$. If the first $d$ of the following modular forms
\[ \Phi_{k,n,1}:= [E_{k-2n}^{1,\chi}(z),\vartheta(z)]_n,
\Phi_{k,n,2} := [E_{k-2n}^{\chi,1}(z),\vartheta(z)]_n\]
for $0 \le n \le \lceil \frac{d}{2} \rceil$ are linearly independent, we call them the {\em Rankin-Cohen basis} of $M_{k+1/2}(4)$.
\end{enumerate}
\end{defi}

We have been unable to prove that the modular forms in the definition are always linearly independent. However,
this was true in all cases we computed.
It does not seem entirely evident how to write down bases via Rankin-Cohen brackets for the plus-space if $k$ is odd and for the full space if $k$ is even which are as simple as the ones above.
It is straight forward to compute the Rankin-Cohen bases by multiplying and differentiating power series.
The sparseness of $\vartheta$ positively effects the speed of the computation.

\smallskip\noindent{\bf Kohnen basis.}
The Kohnen basis is taken from his fundamental paper~\cite{Ko80}, in which he defines the plus-space.
In order to describe it, for every integer $k \ge 2$, let $H_{k+1/2}$ be the Cohen-Eisenstein series explicitly described in the proof of \cite[Theorem 3.1]{Cohen}
as a linear combination of two linearly independent Eisenstein series in $M_{k+1/2}(4)$.

\begin{prop}[Kohnen basis]
\begin{enumerate}[(a)]
\item Let $k \in \ZZ_{\ge 2}$ be even.
Let $a_0 \in \{0,1,2\}$ satisfy $k \equiv a_0 \pmod{3}$ and put $m = \frac{k-4a_0}{6}-1$.
Then the set consisting of the modular forms
\[\begin{matrix}
E_4^{a_0+3a+1}(4z)\cdot E_6^{m-2a}(4z) \cdot H_{5/2}(z),\;\; E_4^{a_0+3a}(4z) \cdot E_6^{m-2a+1}(4z) \cdot \vartheta(z)  \textnormal{ for } 0 \le a \le \lfloor \frac{m}{2} \rfloor, \\
E_4^{\frac{k}{4}}(4z) \cdot \vartheta(z)  \textnormal{ if } 4 \mid k, \textnormal{ and } E_6^{\frac{k-2}{6}}(4z) \cdot  H_{5/2}(z) \textnormal{ if } 6 \mid (k-2)
\end{matrix}\]
forms a basis of $M_{k+1/2}^+(4)$.
\item Let $k \in \ZZ_{\ge 2}$ be odd.
Let $a_0 \in \{0,1,2\}$ satisfy $k \equiv a_0 \pmod{3}$ and put $m = \frac{k-4a_0-9}{6}$.
Then the set consisting of the modular forms
\[\begin{matrix}
E_4^{a_0+3a+1}(4z) \cdot E_6^{m-2a}(4z) \cdot H_{11/2}(z),\;\; E_4^{a_0+3a}(4z) \cdot E_6^{m-2a+1}(4z) \cdot H_{7/2}(z)  \textnormal{ for } 0 \le a \le \lfloor \frac{m}{2} \rfloor \\
E_4^{\frac{k-3}{4}}(4z) \cdot H_{7/2}(z)  \textnormal{ if } 4 \mid (k-3), \textnormal{ and } 
E_6^{\frac{k-5}{6}}(4z) \cdot H_{11/2}(z) \textnormal{ if } 6 \mid (k-5)
\end{matrix}\]
forms a basis of $M_{k+1/2}^+(4)$.
\end{enumerate}
\end{prop}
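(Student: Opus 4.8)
The plan is to verify three things: that each listed form lies in $M^+_{k+1/2}(4)$, that the listed forms are linearly independent, and that their number equals $\dim M^+_{k+1/2}(4)$. Since by Kohnen \cite{Ko80} one has $\dim M^+_{k+1/2}(4) = \dim M_{2k}(1)$, the dimension count and the independence together already force the forms to be a basis, so the heart of the matter is membership and independence.

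For membership I would first record the weight: using $m = \frac{k-4a_0}{6}-1$ (resp.\ $m=\frac{k-4a_0-9}{6}$), a direct computation shows that every listed product has weight exactly $k+\frac12$; for instance $E_4^{a_0+3a+1}(4z)E_6^{m-2a}(4z)H_{5/2}(z)$ has weight $4(a_0+3a+1)+6(m-2a)+\frac52 = k-2+\frac52 = k+\frac12$, and the remaining cases are identical. The plus-space condition is then handled by a single observation: multiplication by $E_4(4z)$ and $E_6(4z)$ preserves $\bigoplus_k M^+_{k+1/2}(4)$. Indeed, the $q$-expansions of $E_4(4z)$ and $E_6(4z)$ are supported on exponents divisible by $4$, the weight shift they induce is even (so $(-1)^k$ is unchanged), and the $N$-th coefficient of a product by such a series involves only the coefficients $a_n$ of the half-integral factor with $n\equiv N\pmod 4$; hence the vanishing condition defining the plus-space propagates. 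Since $\vartheta\in M^+_{1/2}(4)$ and the Cohen--Eisenstein series $H_{5/2},H_{7/2},H_{11/2}$ lie in their respective plus-spaces, every listed form lies in $M^+_{k+1/2}(4)$.

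For the dimension count I would compare the number of listed forms with $\dim M_{2k}(1)$, computed by the classical formula. The forms indexed by $0\le a\le\lfloor m/2\rfloor$ contribute $2(\lfloor m/2\rfloor+1)$ elements, and the two conditional forms account for the boundary residues. Because $a_0=k\bmod 3$, the quantity $\lfloor m/2\rfloor$, and the divisibility conditions $4\mid k$, $6\mid(k-2)$ (resp.\ $4\mid(k-3)$, $6\mid(k-5)$) are all periodic in $k$ with period dividing $12$, and because both the count and $\dim M_{2k}(1)$ increase by $2$ under $k\mapsto k+12$, the required equality reduces to a finite verification over one period, checked directly.

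The main obstacle is linear independence. Grouping a hypothetical relation by its two seeds $S_1,S_2\in\{\vartheta,H_{5/2}\}$ (resp.\ $\{H_{7/2},H_{11/2}\}$) and writing $A=\sum\lambda_{ij}E_4^i(4z)E_6^j(4z)$ and $B=\sum\mu_{ij}E_4^i(4z)E_6^j(4z)$ in $R:=\CC[E_4(4z),E_6(4z)]$, the relation becomes $AS_1+BS_2=0$. Within a single seed the monomials $E_4^i(4z)E_6^j(4z)$ are independent, being the pullback under $q\mapsto q^4$ of the standard monomial basis of the relevant level-one space $M_w(1)$; so everything reduces to showing that the two seeds are linearly independent over $\Frac(R)$, which then gives $A=B=0$ and hence all $\lambda_{ij}=\mu_{ij}=0$. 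In the even case this is clean: by Cohen's isomorphism $\bigoplus_\ell M_\ell(4)\cong\CC[\vartheta,F_2]$ one has $H_{5/2}=\vartheta\,G$ with $G\in M_2(4)$ of nonzero constant term, so $AS_1+BS_2=0$ yields $A=-BG$; comparing the parts of the $q$-expansions supported on exponents $\equiv 2\pmod 4$ (where $A$ vanishes but $BG$ does not unless $B=0$, because $G$ retains a nonzero $\vartheta^4$-component there) forces $A=B=0$. The odd case, where both $H_{7/2}$ and $H_{11/2}$ carry content at exponents $\equiv 2\pmod 4$, is the genuinely delicate point: here I would decompose all $q$-expansions by residue modulo $4$ and use that $A,B$ are supported on multiples of $4$, so that the relation yields for each residue $r\in\{0,3\}$ an identity $a\cdot h_{S_1}^{(r)}+b\cdot h_{S_2}^{(r)}=0$ in $\CC[[q^4]]$, where $a,b$ and the $h^{(r)}$ are the respective residue components; the determinant $h_{S_1}^{(0)}h_{S_2}^{(3)}-h_{S_1}^{(3)}h_{S_2}^{(0)}$ is a fixed power series whose nonvanishing, established on its leading coefficient by a finite computation with the explicit Cohen--Eisenstein series, forces $a=b=0$. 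Alternatively, one may bypass this last computation by invoking the freeness of $\bigoplus_k M^+_{k+1/2}(4)$ as an $R$-module implicit in \cite{Ko80}, which yields the independence of all listed forms at once.
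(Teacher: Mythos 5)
Your argument is correct, but it takes a genuinely different route from the paper's. The paper's proof is a two-line reduction to Kohnen's Proposition~1 in \cite{Ko80}, which asserts precisely that for $k$ even the map $(g,h)\mapsto g(4z)\vartheta(z)+h(4z)H_{5/2}(z)$ is an isomorphism from $M_k(1)\oplus M_{k-2}(1)$ onto $M^+_{k+1/2}(4)$ (for $k$ odd, the analogue with $H_{7/2},H_{11/2}$ and weights $k-3,k-5$); all that remains to check, and all the paper checks, is that the listed monomials constitute the standard monomial bases of the two level-one factors: multiplication by $E_4$ (resp.\ $E_6$) carries the standard basis of $M_w(1)$, $w=k-6$ or $k-9$, onto all standard monomials of $M_{w+4}(1)$ (resp.\ $M_{w+6}(1)$) except the pure $E_6$ (resp.\ $E_4$) power, which is exactly what the two conditional elements restore. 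You instead take from \cite{Ko80} only the numerical statement $\dim M^+_{k+1/2}(4)=\dim M_{2k}(1)$ and re-derive the structural content by hand: your membership argument (supports of $E_4(4z),E_6(4z)$ in $4\ZZ$ plus even weight shift), your periodicity-in-$k$ dimension count, and your $\Frac(R)$-independence of the two seeds together amount to re-proving Kohnen's Proposition~1; indeed, your closing alternative (freeness of $\bigoplus_k M^+_{k+1/2}(4)$ over $R$) is that proposition in disguise, and it is exactly what the paper invokes. Your route is more self-contained but leans on finite verifications that you assert rather than carry out; for the record, they do go through: the $k$-periodicity reduction of the dimension count is valid and the base cases check; in the even case the needed input is $H_{5/2}=\vartheta G$ with $c_0(G)\neq 0$ (immediate from $c_0(H_{5/2})=\zeta(-3)\neq 0$ and Cohen's isomorphism of \cite{Cohen}), whence the coefficient of $q^2$ in $G$ is $24\,c_0(G)\neq 0$; and in the odd case the $q^3$-coefficient of your determinant equals $\zeta(-5)H(5,3)-\zeta(-9)H(3,3)=\bigl(-\tfrac{1}{252}\bigr)\bigl(\tfrac{2}{3}\bigr)-\bigl(-\tfrac{1}{132}\bigr)\bigl(-\tfrac{2}{9}\bigr)=-\tfrac{1}{378}-\tfrac{1}{594}\neq 0$. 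The paper's route buys brevity and avoids any coefficient computation; yours buys logical economy of input (only a dimension formula plus elementary power-series manipulation), which is arguably closer in spirit to the algorithms the paper describes.
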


\begin{proof}
Let $w = k-6$ if $k$ is even and $w=k-9$ if $k$ is odd.
The modular forms $E_4^{a_0+3a} E_6^{m-2a}$ for $0 \le a \le \lfloor \frac{m}{2} \rfloor$ form a basis of $M_w(1)$.
Multiplying by $E_4$, this space is mapped injectively into $M_{w+4}(1)$ hitting all standard basis elements of the target space
except $E_6^{\frac{w+4}{6}}$ if $6 \mid (w+4)$.
Similarly, multiplying by $E_6$, we obtain a subspace of $M_{w+6}(1)$ containing all standard basis elements except $E_4^{\frac{w+6}{4}}$ if $4 \mid (w+2)$.
Now it suffices to apply \cite[Proposition~1]{Ko80}.
\end{proof}

As in the other cases, the Kohnen bases can be obtained by multiplying power series that can be easily computed.
In particular, we use that Cohen's modular forms $H_{5/2}$, $H_{7/2}$ and $H_{11/2}$ can be explicitly given in terms of the Cohen standard basis (see \cite[Corollary 3.2]{Cohen}).

\section{Complexity and running time}
The three above bases have been implemented as a package FastBases~\cite{FB} in the computer algebra system {\sc Magma}~\cite{Magma}.
In this section, we make some remarks on the complexity and the running times.

The main cost for computing all three bases is the multiplication of power series. We thus ignore the cost of writing down Eisenstein and theta series and differentiating power series. The following table shows the number of multiplications of power series (with fixed precision) in the current implementation as a function of the weight $k \in \frac{1}{2} \ZZ \setminus \ZZ$ (we assume $k-\frac{1}{2}$ even for the Rankin-Cohen basis) with only minor approximations:

\begin{center}
\begin{tabular}{||l||l||l||}
\hline
Cohen Basis of $M_k(4)$ & Kohnen Basis of $M_k(4)^+$ & Rankin-Cohen Basis of $M_k(4)^+$  \\
\hline
\hline
$\frac{3}{2} k + 4$ & $\frac{5}{12} k + 12$ & $\frac{1}{72}k^2 + \frac{3}{4} k + 10$  \\
\hline
\end{tabular}
\end{center}

The number of multiplications for the Cohen and the Kohnen basis is linear in the weight (and the dimension), whereas the dependence is quadratic for the Rankin-Cohen basis. Since all computations are done over the rational numbers, the size of the coefficients also plays a significant role.
According to the Ramanujan-Petersson Conjecture (for instance \cite{Koh94}), $|a_n(f)|$ is asymptotically bounded by $n^{(k-1)/2 + \epsilon}$.
This allows us to give a rough estimate of the running time of the algorithm.
Namely, \cite[Corollary~8.27]{ModernCA} states that the number of word operations for the multiplication of two integral polynomials of degree~$D$ with coefficients of bit size~$S$ is $\tilde{O}(SD)$, where $\tilde{O}$ ignores logarithmic factors. Consequently, the number of word operations in any of our algorithms for the computation of weight $k$ in precision~$D$ is roughly bounded by $\tilde{O}(m(k) \frac{k-1}{2} D)$, where $m(k)$ is the number of power series multiplications listed in the preceding table. This estimate ignores many contributions, such as the (bounded) denominator of the power series, the fact that the power series that we multiply may actually have smaller coefficients, power series such as $\vartheta$ are lacunary, etc.

We remark that it is possible to reduce the number of multiplications in the Kohnen basis by at least a factor of 2 by using the modular function $E_4^3/E_6^2$. As, however, the coefficients of this function are very big, the practially observed running time of the algorithm gets significantly worse if one uses it.

In order to give the reader an idea about actual running times, we include the following short table of running times for given precision on a standard laptop computer\footnote{Intel Core i5 Dual Core CPU 1.80 GHz, 8 GB 1600 MHz DDR3 RAM} using our implementation.

\hspace{1.2cm}

\begin{center}
\begin{tabular}{||l||l|l|l||l|l|l||l|l|l||}
\hline
& \multicolumn{3}{c||}{Cohen Basis of $M_k(4)$} &  \multicolumn{3}{c||}{Kohnen Basis of $M_k(4)^+$} & \multicolumn{3}{c||}{Rankin-Cohen Basis of $M_k(4)^+$}  \\
weight & $10^4$ & $10^5$ & $10^6$ & $10^4$ & $10^5$ & $10^6$ & $10^4$ & $10^5$ & $10^6$\\
\hline
\hline
25/2  & 0.16 & 3.02   & 55.34   & 0.12 & 2.05  & 34.92   &  0.13  & 1.76   & 31.71 \\
\hline
41/2  & 0.40 & 6.86   & 134.19  & 0.22 & 3.39  & 61.96   &  0.25  & 4.08   & 73.91 \\
\hline
101/2 & 2.31 & 50.82  & 934.79  & 0.76 & 17.47 & 283.91  &  2.58  & 51.81  & 796.42  \\
\hline
201/2 & 8.65 & 193.83 & 3486.27 & 3.58 & 60.30 & 1015.79 &  20.63 & 358.83 & 5104.97 \\
\hline
\end{tabular}
\end{center}

A clear conclusion is that the Kohnen basis is the one to choose for the computation of the Kohnen plus-space unless the weights are small, in which case the Rankin-Cohen basis has a slightly better performance due to the lacunarity of~$\vartheta$.
In order to compare with the rough complexity estimate above, we studied the behaviour of the running times experimentally.
First we approximated the running time as a function of the weight $k \in \frac{1}{2}\ZZ \setminus \ZZ$ by $f(k) = b \cdot k^a$  with a particular interest in the exponent~$a$.
The following table shows the calculated exponents~$a$ for the cases of $10^5$ and $10^6$ coefficients.

\hspace{1.2cm}

\begin{center}
\begin{tabular}{||l||l|l|l||}
\hline
nb.~coeff. & Cohen Basis of $M_k(4)$ & Kohnen Basis of $M_k(4)^+$ & Rankin-Cohen Basis of $M_k(4)^+$\\
\hline
\hline
$10^5$ &  1.93  & 1.74 & 2.77  \\
\hline
$10^6$ &  1.93  & 1.81 & 2.57 \\
\hline
\end{tabular}
\end{center}

The results are slightly better than the rough complexity estimate that we gave above. This data makes the advantage of the Kohnen basis over the Rankin-Cohen basis visible.

For fixed modular forms spaces, we also approximated the running time as a function of the number of coefficients~$D$ by the function $g(D) = b \cdot D^a$, again with particular interest in the exponent~$a$.
The following table shows the calculated value of~$a$ for three different weights.\\[-.6cm]

\hspace{1cm}

\begin{center}
\begin{tabular}{||l||l|l|l||}
\hline
weight & Cohen Basis of $M_k(4)$ & Kohnen Basis of $M_k(4)^+$ & Rankin-Cohen Basis of $M_k(4)^+$\\
\hline
\hline
$41/2$  & 1.16 & 1.17 & 1.17 \\
\hline
$101/2$ & 1.31 & 1.21 & 1.22 \\
\hline
$201/2$ & 1.22 & 1.18 & 1.13 \\
\hline
\end{tabular}
\end{center}

We see that the three algorithms present a similar behaviour of the computation time with respect to the number of coefficients, which is also in line with the rough estimate above (which ignores $\log$-factors).
The data suggests a slight advantage for the Rankin-Cohen basis, which might be caused by the lacunarity of $\vartheta$ and its derivatives.

We close the paper with some final remarks.
Often one is interested in Hecke eigenforms. The algorithms of this paper give a basis of the space of modular forms consisting of modular forms with rational coefficients. By basic linear algebra, one can express Hecke eigenforms as linear combinations, usually with coefficients in a number field, of these basis elements. Determining the coefficients in the linear combinations can be done via the Hecke action on $q$-expansions with relatively low precision, so that this computation comes without any extra cost if one is interested in high precision.

The standard and the Kohnen basis rely on an explicit description of a basis in terms of modular forms the $q$-expansion of which can be computed efficiently to a high precision.
We do not know of such a simple description in any higher level, even though it is known that the algebra of modular forms can be described by generators and relations in small levels by a result of Voight and Zureick-Brown \cite[Prop.~11.3.1]{VZB}, proving a conjecture of Rustom~\cite{Rustom}.
Even if only generators (which might have some linear dependence) can be written down quickly to a high precision, similar algorithms as those presented here will be direct consequences. For the moment, this remains an open problem.

\bibliography{References}
\bibliographystyle{alpha}

\end{document}